\providecommand{\U}[1]{\protect\rule{.1in}{.1in}}
\newtheorem{theorem}{Theorem}
\newtheorem{proposition}[theorem]{Proposition}
\newenvironment{proof}[1][Proof]{\noindent\textbf{#1.} }{\ \rule{0.5em}{0.5em}}
\begin{document}

\date{}
\title{Integrals of Legendre polynomials and approximations}
\author{Abdelhamid Rehouma\\Department of mathematics Faculty of exact sciences\\University Hama Lakhdar, Eloued Algeria.\\URL: https://sites.google.com/view/mathsrehoumablog/accueil\\E-mail : rehoumaths@gmail.com}
\maketitle

\begin{abstract}
Let $\left\{  Q_{r}\left(  x\right)  \right\}  $ be a system of integral
Legendre polynomials of degree exactly $n$.The functions $Q_{i}\left(
x\right)  $ and $Q_{j}\left(  x\right)  $ ($i\neq j$) are orthogonal with
respect to the weight function $w\left(  x\right)  =\dfrac{1}{1-x^{2}}$, where
$-1\leq x\leq1.$We derive some identities and relations and extremal problems
and minimization and Fourier development involving of integral Legendre polynomials.

\end{abstract}

\bigskip

\textbf{Keywords:}{\scriptsize \ Legendre \ polynomials, PIPCIR
polynomials,\ Three-term rrecursive relation, Summations; Finite linear
combinations, Differentiation. \vspace{4mm} \newline}

\noindent\textbf{2000. MSC:}{\footnotesize \ 42C05, 33C45.}

\section{Introduction}

We restricted our attention to a polynomial with the first and last roots at
$x=\pm1$ ,given by
\begin{equation}
Q_{n}\left(  x\right)  =\left(  x^{2}-1\right)  q_{n-2}\left(  x\right)
\text{ \ \ \ \ ,}n\geq2 \label{Defi1}%
\end{equation}

Let us call a polynomial whose inflection points coincide with their interior
roots in a shorter way : Pipcir .It will be shown that the zeros of these
polynomials are all real, distinct, and they lie in the interval $\left[
-1\text{ \ \ \ \ }1\right]  .$The requirement all inflection points to
coincide with all roots of $Q_{n}\left(  x\right)  $ except $\pm1$ yields:%
\[
Q_{n}^{\prime\prime}\left(  x\right)  =-n\left(  n-1\right)  q_{n-2}\left(
x\right)
\]

or%
\begin{equation}
\left(  1-x^{2}\right)  Q_{n}^{\prime\prime}\left(  x\right)  +n\left(
n-1\right)  Q_{n}\left(  x\right)  =0 \label{Diff2}%
\end{equation}

Let us differentiate the equation \eqref{Diff2}%
\begin{equation}
-2xQ_{n}^{\prime\prime}\left(  x\right)  +\left(  1-x^{2}\right)
Q_{n}^{\prime\prime\prime}\left(  x\right)  +n\left(  n-1\right)
Q_{n}^{\prime}\left(  x\right)  =0 \label{Diff3}%
\end{equation}

We have now well-known Legendre's differential equation whose bounded on
$\left[  -1\text{ \ \ \ \ }1\right]  $ solutions are known as Legendre
polynomials:$y_{n}=L_{n-1}\left(  x\right)  $ ,$n\geq1.$One can find
properties of these polynomials in \cite{Abram},\cite{Belinsk},\cite{Pijeir}%
,they are normalized so that $L_{n}\left(  1\right)  =1$ for all $n$ .If%
\begin{equation}
Q_{n}\left(  x\right)  =-%
{\displaystyle\int\limits_{x}^{1}}
L_{n-1}\left(  t\right)  dt\text{ \ \ \ \ \ }-1\leq x\leq1 \label{Qqn}%
\end{equation}

then $Q_{n}^{\prime}\left(  x\right)  =L_{n-1}\left(  x\right)  $ and
$Q_{n}^{\prime\prime}\left(  x\right)  =L_{n-1}^{\prime}\left(  x\right)  $
.We see that polynomials $Q_{n}\left(  x\right)  $ defined by \eqref{Qqn}
satisfy the equation \eqref{Diff2}.Thus,%
\begin{equation}
Q_{n}\left(  1\right)  =Q_{n}\left(  -1\right)  =0\text{ \ \ \ \ ,}n\geq2
\label{Qqn1}%
\end{equation}

We establish and We derive some identities and relations and extremal problems
and Fourier development involving of integral Legendre polynomials,also we
construct a new orthogonal systems by rational transforms in variable of
normalized integral Legendre polynomials with respect to a new weight function .

The Legendre Polynomial,$L_{n}\left(  x\right)  $ saisfies,\cite{Abram}%
,\cite{Belinsk},\cite{Pijeir} :%
\begin{equation}
L_{n}\left(  x\right)  =\frac{1}{2^{n}n!}\left(  \left(  x^{2}-1\right)
^{n}\right)  ^{\left(  n\right)  } \label{DifLn}%
\end{equation}
reads%

\[
L_{n}\left(  x\right)  =\frac{2n\left(  2n-1\right)  ..\left(  n+1\right)
}{2^{n}n!}\left(  x^{n}-\frac{n\left(  n-1\right)  }{2\left(  2n-1\right)
}x^{n-2}+\frac{n\left(  n-1\right)  \left(  n-2\right)  \left(  n-3\right)
}{2\left(  2n-1\right)  \left(  2n-3\right)  }x^{n-4}..+L_{n}\left(  0\right)
\right)
\]

The Legendre polynomials, denoted by $L_{k}\left(  x\right)  $, are the
orthogonal polynomials with $\omega\left(  x\right)  =1.$The three-term
recurrence relation for the Legendre polynomials reads,\cite{Abram}%
,\cite{Belinsk},\cite{Pijeir}%
\[
L_{0}\left(  x\right)  =1,L_{1}\left(  x\right)  =x,
\]
and%
\[
\left(  n+1\right)  L_{n+1}\left(  x\right)  =\left(  2n+1\right)
xL_{n}\left(  x\right)  -nL_{n-1}\left(  x\right)  \text{ \ \ \ \ \ }%
n=1,2,3..
\]
They are normalized so that $L_{n}\left(  1\right)  =1$ for all $n.$
yields,\cite{Abram},\cite{Belinsk},\cite{Pijeir}:%
\[
\left(  \left(  1-x^{2}\right)  L_{n}^{\prime}\left(  x\right)  \right)
^{\prime}+n\left(  n+1\right)  L_{n}\left(  x\right)  =0
\]
and%
\begin{equation}%
{\displaystyle\int\limits_{-1}^{1}}
L_{n}\left(  x\right)  L_{m}\left(  x\right)  dx=\frac{2}{2n+1}\delta
_{n,m},\text{ \ \ \ \ }n,m=1,2,3.. \label{orthLn}%
\end{equation}
and%
\[%
{\displaystyle\int\limits_{-1}^{1}}
L_{n}^{2}\left(  x\right)  =\frac{2n-1}{2n+1}%
{\displaystyle\int\limits_{-1}^{1}}
L_{n-1}^{2}\left(  x\right)  \text{ \ \ \ \ }n=1,2,3..
\]
We also derive that,\cite{Abram},\cite{Belinsk},\cite{Pijeir}%
\[%
{\displaystyle\int\limits_{-1}^{x}}
L_{n}\left(  t\right)  dt=\frac{1}{2n+1}\left(  L_{n+1}\left(  x\right)
-L_{n-1}\left(  x\right)  \right)
\]

We derive from above a recursive relation for computing the derivatives of the
Legendre polynomials,\cite{Abram},\cite{Belinsk},\cite{Pijeir}:
\[
L_{n}\left(  x\right)  =\frac{1}{2n+1}\left(  L_{n+1}^{\prime}\left(
x\right)  -L_{n-1}^{\prime}\left(  x\right)  \right)
\]
We also derive that,\cite{Abram}%
\begin{equation}
L_{n}\left(  \pm1\right)  =\left(  \pm1\right)  ^{n} \label{Lnat1}%
\end{equation}%
\[
L_{n}^{\prime}\left(  \pm1\right)  =\frac{1}{2}\left(  \pm1\right)
^{n-1}n\left(  n+1\right)
\]%
\[
L_{n}^{\prime\prime}\left(  \pm1\right)  =\left(  \pm1\right)  ^{n}\left(
n-1\right)  n\left(  n+1\right)  \left(  n+2\right)  /8
\]
We also derive that%
\begin{equation}
L_{n}\left(  x\right)  =\frac{1}{2^{n}}\sum_{k=0}^{n}\left(  C_{k}^{n}\right)
^{2}\left(  x-1\right)  ^{n-k}\left(  x+1\right)  ^{k} \label{expp}%
\end{equation}
becomes%
\begin{equation}
L_{n}\left(  0\right)  =\frac{1}{2^{n}}\sum_{k=0}^{n}\left(  -1\right)
^{n-k}\left(  C_{k}^{n}\right)  ^{2} \label{Lnat0}%
\end{equation}
and%
\begin{equation}
L_{2n}\left(  0\right)  =\frac{1}{2^{2n}}\left(  -1\right)  ^{n}C_{n}%
^{2n}=\left(  -1\right)  ^{n}\frac{1}{n!}\left(  \frac{1}{2}\right)  _{n}
\label{L2nat0}%
\end{equation}
and%
\begin{equation}
L_{2n+1}^{\prime}\left(  0\right)  =2\left(  -1\right)  ^{n}\frac{1}%
{n!}\left(  \frac{1}{2}\right)  _{n+1} \label{DeriLnat0}%
\end{equation}
and%
\begin{equation}
L_{n}^{\prime}\left(  0\right)  =\frac{1}{2^{n}}\sum_{k=0}^{n}\left(
-1\right)  ^{n-k}\left(  -n+2k\right)  \left(  C_{k}^{n}\right)  ^{2}
\label{Lnderivat0}%
\end{equation}

In fact,\cite{Belinsk}%
\begin{equation}
Q_{n}\left(  0\right)  =\frac{\left(  -1\right)  ^{\frac{n-2}{2}}\left(
n-3\right)  !!}{n!!} \label{Qnatzero}%
\end{equation}
The Rodrigues formula for the Gegenbauer polynomials is well known as the
following ,\cite{Belinsk}%
\begin{equation}
Q_{n}\left(  x\right)  =\frac{x^{2}-1}{2^{n-1}n!\left(  n-1\right)  }\left[
\left(  x^{2}-1\right)  ^{n-1}\right]  ^{\left(  n\right)  } \label{Rodrigues}%
\end{equation}

Now we have two expressions for $Q_{n}(x)$; equating them, we obtain the
formula%
\begin{equation}
\left(  x^{2}-1\right)  \left[  \left(  x^{2}-1\right)  ^{n-1}\right]
^{\left(  n\right)  }=n\left(  n-1\right)  \left[  \left(  x^{2}-1\right)
^{n-1}\right]  ^{\left(  n-2\right)  } \label{Second}%
\end{equation}

As is well known,\cite{Abram},we note that%
\begin{equation}%
{\displaystyle\int\limits_{-1}^{1}}
uv^{\left(  n\right)  }=\left\{
{\displaystyle\sum\limits_{k=1}^{n}}
\left(  -1\right)  ^{k-1}u^{\left(  k-1\right)  }v^{\left(  n-k\right)
}\right\}  _{-1}^{1}+\left(  -1\right)  ^{n}%
{\displaystyle\int\limits_{-1}^{1}}
u^{\left(  n\right)  }v \label{GenerIntegParts}%
\end{equation}
one can find%
\[
Q_{n}\left(  \pm1\right)  =0
\]%
\begin{equation}
Q_{n}^{\prime}\left(  1\right)  )=1 \label{Qnderiv1}%
\end{equation}%
\[
Q_{n}^{\prime\prime}\left(  1\right)  )=\frac{1}{2}\left(  \pm1\right)
^{n-1}n\left(  n-1\right)
\]%
\[
-2Q_{n}^{\prime\prime}\left(  1\right)  +n\left(  n-1\right)  Q_{n}^{\prime
}\left(  1\right)  =0
\]

We derive from above a recursive relation for computing the derivatives of the
\textbf{Legendr}e polynomials,\cite{Abram}:
\[
L_{n}\left(  x\right)  =\frac{1}{2n+1}\left(  L_{n+1}^{\prime}\left(
x\right)  -L_{n-1}^{\prime}\left(  x\right)  \right)
\]
reduces to%
\begin{equation}
Q_{n}^{\prime}\left(  x\right)  =\frac{1}{2n-1}\left(  Q_{n+1}^{\prime\prime
}\left(  x\right)  -Q_{n-1}^{\prime\prime}\left(  x\right)  \right)
\label{Pipcirs2}%
\end{equation}
Because for all $n,$:%
\[
Q_{n}\left(  1\right)  =Q_{n}\left(  -1\right)  =0
\]
Integrating both sides \ of \eqref{Pipcirs2} yields%
\begin{equation}
\int Q_{n}\left(  x\right)  dx=\frac{1}{2n-1}\left(  Q_{n+1}\left(  x\right)
-Q_{n-1}\left(  x\right)  \right)  \label{Pipcirs3}%
\end{equation}

We concentrate to prove the following proposition

\begin{proposition}
The functions $Q_{n}\left(  x\right)  $ and $Q_{n}\left(  x\right)  $ ($n\neq
m$) are orthogonal with respect to the weight function $w\left(  x\right)
=\dfrac{1}{1-x^{2}}$.Then
\end{proposition}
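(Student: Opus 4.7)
The plan is to run the standard Sturm--Liouville self-adjointness argument using the second-order equation (\ref{Diff2}) together with the boundary vanishing (\ref{Qqn1}).

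First I would rewrite (\ref{Diff2}) for two distinct indices $n\neq m$ as
\[
(1-x^{2})Q_{n}^{\prime\prime}(x)=-n(n-1)Q_{n}(x),\qquad (1-x^{2})Q_{m}^{\prime\prime}(x)=-m(m-1)Q_{m}(x).
\]
Dividing each equation by $1-x^{2}$, multiplying the first by $Q_{m}$ and the second by $Q_{n}$, and subtracting yields
\[
Q_{m}(x)Q_{n}^{\prime\prime}(x)-Q_{n}(x)Q_{m}^{\prime\prime}(x)=\bigl[m(m-1)-n(n-1)\bigr]\frac{Q_{n}(x)Q_{m}(x)}{1-x^{2}}.
\]
Integrating over $[-1,1]$ and recognising the left-hand side as the derivative of the Wronskian $Q_{m}Q_{n}^{\prime}-Q_{n}Q_{m}^{\prime}$, I would obtain
\[
\bigl[Q_{m}(x)Q_{n}^{\prime}(x)-Q_{n}(x)Q_{m}^{\prime}(x)\bigr]_{-1}^{1}=\bigl[m(m-1)-n(n-1)\bigr]\int_{-1}^{1}\frac{Q_{n}(x)Q_{m}(x)}{1-x^{2}}\,dx.
\]
By (\ref{Qqn1}) we have $Q_{n}(\pm1)=Q_{m}(\pm1)=0$, so the boundary term vanishes. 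Since $n,m\ge 2$ and the function $k\mapsto k(k-1)$ is strictly increasing on $\{2,3,\dots\}$, the factor $m(m-1)-n(n-1)$ is nonzero whenever $n\neq m$, forcing
\[
\int_{-1}^{1}\frac{Q_{n}(x)Q_{m}(x)}{1-x^{2}}\,dx=0.
\]

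The only point that requires a small check is the integrability of the weighted product near $x=\pm1$. This is immediate from the factorisation (\ref{Defi1}): since $Q_{n}(x)=(x^{2}-1)q_{n-2}(x)$ and $Q_{m}(x)=(x^{2}-1)q_{m-2}(x)$, one has
\[
\frac{Q_{n}(x)Q_{m}(x)}{1-x^{2}}=-(x^{2}-1)\,q_{n-2}(x)q_{m-2}(x),
\]
which is a polynomial, so the integrand is continuous on $[-1,1]$ and all the integrations above are legitimate. I do not anticipate a serious obstacle; the main subtlety is simply to verify the boundary term vanishes and that the eigenvalues $n(n-1)$ are distinct for distinct indices, both of which are already built into the setup.
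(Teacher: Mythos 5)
Your argument is correct, but it is a genuinely different route from the paper's. You run the classical Sturm--Liouville self-adjointness argument: write the equation \eqref{Diff2} in the form $Q_n''=-n(n-1)Q_n/(1-x^2)$, cross-multiply, recognise the Wronskian derivative $\left(Q_mQ_n'-Q_nQ_m'\right)'$, integrate, and kill the boundary term with \eqref{Qqn1}; the distinctness of the eigenvalues $n(n-1)$ then forces the integral to vanish, and your remark that the integrand $-\left(x^2-1\right)q_{n-2}(x)q_{m-2}(x)$ is a polynomial disposes of any integrability worry. The paper instead starts from the Rodrigues-type representation \eqref{ExpQnn}, uses \eqref{DiffEqQn} to write $Q_n(x)x^k/(1-x^2)$ as a constant multiple of $\left(\left(x^2-1\right)^{n-1}\right)^{(n)}x^k$, and integrates by parts repeatedly, the boundary terms vanishing because low-order derivatives of $\left(x^2-1\right)^{n-1}$ vanish at $\pm1$. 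The two proofs buy slightly different things: yours is shorter and needs only the differential equation plus the endpoint zeros, whereas the paper's establishes the stronger fact that $Q_n$ is orthogonal to \emph{every} monomial $x^k$ with $k\le n-1$ (hence to all polynomials of degree below $n$, not merely to the other $Q_m$), which is the form of orthogonality actually used later for the kernel and extremal-problem machinery. Neither proof addresses the norm evaluation \eqref{NormQn}, so you are not missing anything the paper supplies on that point.
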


\begin{equation}%
{\displaystyle\int\limits_{-1}^{1}}
\dfrac{Q_{n}\left(  x\right)  Q_{m}\left(  x\right)  }{1-x^{2}}%
dx=0,\ \ \ \ \ (n\neq m),\ \ \ n,m=1,2,3.. \label{OrthQn}%
\end{equation}
and%
\begin{equation}
\left\Vert Q_{n}\right\Vert ^{2}=%
{\displaystyle\int\limits_{-1}^{1}}
\dfrac{Q_{n}^{2}\left(  x\right)  }{1-x^{2}}dx=\frac{2}{n\left(  n-1\right)
\left(  2n-1\right)  },\ \ \ \ \ \ \ \ \ \ \ n=2,3,4..... \label{NormQn}%
\end{equation}

\begin{proof}
In fact,%
\begin{equation}
L_{n}\left(  x\right)  =\frac{1}{2^{n}n!}\left(  \left(  x^{2}-1\right)
^{n}\right)  ^{\left(  n\right)  } \label{ExpLn}%
\end{equation}
and%
\begin{equation}
\left(  1-x^{2}\right)  Q_{n}^{\prime\prime}\left(  x\right)  +n\left(
n-1\right)  Q_{n}\left(  x\right)  =0 \label{DiffEqQn}%
\end{equation}
we can write%
\begin{equation}
Q_{n}\left(  x\right)  =-%
{\displaystyle\int\limits_{x}^{1}}
L_{n-1}\left(  t\right)  dt\text{ }=\frac{-1}{2^{n-1}\left(  n-1\right)
!}\left(  \left(  x^{2}-1\right)  ^{n-1}\right)  ^{\left(  n-2\right)  }
\label{ExpQnn}%
\end{equation}
we obtain the formula%
\begin{equation}
\left(  x^{2}-1\right)  \left(  \left(  x^{2}-1\right)  ^{n-1}\right)
^{\left(  n\right)  }=n\left(  n-1\right)  \left(  \left(  x^{2}-1\right)
^{n-1}\right)  ^{\left(  n-2\right)  } \label{TwoDiffEq}%
\end{equation}
then, for $k=0,1,2,3.....n$%
\[
\dfrac{Q_{n}\left(  x\right)  x^{k}}{1-x^{2}}=\frac{-1}{n\left(  n-1\right)
}Q_{n}^{\prime\prime}\left(  x\right)  x^{k}=\frac{-1}{2^{n-1}n!\left(
n-1\right)  }\left(  \left(  x^{2}-1\right)  ^{n-1}\right)  ^{\left(
n\right)  }x^{k}%
\]
then, for $k=0,1,2,....n-1$%
\[%
{\displaystyle\int\limits_{-1}^{1}}
\dfrac{Q_{n}\left(  x\right)  x^{k}}{1-x^{2}}dx=\frac{-1}{2^{n-1}n!\left(
n-1\right)  }%
{\displaystyle\int\limits_{-1}^{1}}
\left(  \left(  x^{2}-1\right)  ^{n-1}\right)  ^{\left(  n\right)  }x^{k}dx
\]%
\[
\frac{-1}{2^{n-1}n!\left(  n-1\right)  }\left[  \left(  \left(  x^{2}%
-1\right)  ^{n-1}\right)  ^{\left(  n-1\right)  }x^{k}\right]  _{x=-1}%
^{x=1}+\frac{k}{2^{n-1}n!\left(  n-1\right)  }%
{\displaystyle\int\limits_{-1}^{1}}
\left(  \left(  x^{2}-1\right)  ^{n-1}\right)  ^{\left(  n-1\right)  }%
x^{k-1}dx
\]%
\[
=-\frac{k\left(  k-1\right)  }{2^{n-1}n!\left(  n-1\right)  }%
{\displaystyle\int\limits_{-1}^{1}}
\left(  \left(  x^{2}-1\right)  ^{n-1}\right)  ^{\left(  n-2\right)  }%
x^{k-2}dx
\]%
\[
.............................................................
\]%
\[
\pm\frac{k!}{2^{n-1}n!\left(  n-1\right)  }%
{\displaystyle\int\limits_{-1}^{1}}
\left(  \left(  x^{2}-1\right)  ^{n-1}\right)  ^{\left(  n-k\right)  }%
dx=\pm\frac{k!}{2^{n-1}n!\left(  n-1\right)  }\left[  \left(  \left(
x^{2}-1\right)  ^{n-1}\right)  ^{\left(  n-k-1\right)  }\right]  _{x=-1}%
^{x=1}=0
\]

\end{proof}

\section{The n-th kernel for the integral Legendre polynomials}

The $n$-th $Q$-kernel is given by, \cite{Foupouan}, \cite{Szeg}
\begin{equation}
K_{n}\left(  x,y\right)  =%
{\displaystyle\sum\limits_{k=0}^{n}}
\frac{Q_{k}(x)Q_{k}(y)}{\left\Vert Q_{k}\right\Vert ^{2}}. \label{Kern}%
\end{equation}
satisfies the \textbf{Christoffel-Darboux} formula, \cite{Foupouan},
\cite{Abramo},\cite{Szeg}%
\begin{equation}
K_{n}\left(  x,y\right)  =\frac{1}{\left\Vert Q_{n}\right\Vert ^{2}}%
\frac{Q_{n+1}(x)Q_{n}(y)-Q_{n+1}(y)Q_{n}(x)}{x-y},\ \ \ \ \ \ x\neq y
\label{CDS11}%
\end{equation}
and for $x=y$ one has
\begin{equation}
K_{n}\left(  x,x\right)  =\frac{1}{\left\Vert Q_{n}\right\Vert ^{2}}\left(
Q_{n+1}^{\prime}(x)Q_{n}(x)-Q_{n+1}(x)Q_{n}^{\prime}(x)\right)  . \label{ABC}%
\end{equation}
$K_{n}$ has the reproducing kernel property \cite{Foupouan}, \cite{Abramo}%
,\cite{Szeg}:%
\begin{equation}
f\left(  x\right)  =\int\limits_{-1}^{1}K_{n}\left(  x,t\right)  f\left(
t\right)  \frac{dt}{1-t^{2}} \label{Reprkernel}%
\end{equation}

\subsection{Extremal problem for integral Legendre polynomials}

Let $x\rightarrow w(x)$ be a nonnegative function on the interval $[-1,1]$
such that
\[
\int\limits_{-1}^{1}x^{r}w\left(  x\right)  dx
\]
exists for $r\geq0,$and consider the definite integral of the form%
\begin{equation}
I_{n}=\int\limits_{-1}^{1}\left(  b_{0}+b_{1}x+......b_{n}x^{n}\right)
^{2}\frac{dx}{1-x^{2}} \label{Integrl}%
\end{equation}

The problem to be solved is to determine the polynomial $x\longrightarrow
f_{n}\left(  x\right)  $ of order $n$ \ which minimizes the integral
\eqref{Integrl} under the constraint that $f_{n}\left(  0\right)  =1.$Since
the integrand is nonnegative for any value of $x\in\lbrack-1,1]$ such a
minimum value does exist.

Using standard minimization technique \cite{Foupouan}, \cite{Abramo}%
,\cite{Szeg} . It is well known that:%
\[
I_{n}=\int\limits_{-1}^{1}\left(  \sum_{k=0}^{n}a_{k}Q_{k}\left(  x\right)
\right)  ^{2}\frac{dx}{1-x^{2}}%
\]
and%
\begin{equation}
\sum_{k=0}^{n}a_{k}Q_{k}\left(  0\right)  =1 \label{Condition}%
\end{equation}
Denoting by%
\[
\left\Vert Q_{n}\right\Vert ^{2}=%
{\displaystyle\int\limits_{-1}^{1}}
\dfrac{Q_{n}^{2}\left(  x\right)  }{1-x^{2}}dx=\frac{2}{n\left(  n-1\right)
\left(  2n-1\right)  },\ \ \ \ \ \ \ \ \ \ \ n=2,3,4....
\]
we easily find ( in view of \eqref{Condition}) we can choosing \cite{Foupouan}%
, \cite{Abramo},\cite{Szeg} :
\[
a_{k}=\frac{Q_{k}\left(  0\right)  }{\left\Vert Q_{k}\right\Vert ^{2}}\frac
{1}{%
{\displaystyle\sum\limits_{j=0}^{n}}
\frac{Q_{j}\left(  0\right)  ^{2}}{\left\Vert Q_{j}\right\Vert ^{2}}}%
\]
so that the minimum value M of the integral \eqref{Integrl} under the
aforementioned constraint is%
\begin{equation}
M=\int\limits_{-1}^{1}\left(  \sum_{k=0}^{n}\frac{Q_{k}\left(  0\right)
M}{\left\Vert Q_{k}\right\Vert ^{2}}Q_{k}\left(  x\right)  \right)  ^{2}%
\frac{dx}{1-x^{2}}=\frac{1}{%
{\displaystyle\sum\limits_{j=0}^{n}}
\frac{Q_{j}\left(  0\right)  ^{2}}{\left\Vert Q_{j}\right\Vert ^{2}}}
\label{Mvalue}%
\end{equation}
and%
\begin{equation}
f_{n}\left(  x\right)  =\frac{1}{%
{\displaystyle\sum\limits_{j=0}^{n}}
\frac{Q_{j}\left(  0\right)  ^{2}}{\left\Vert Q_{j}\right\Vert ^{2}}}%
\sum_{k=0}^{n}\frac{Q_{k}\left(  0\right)  Q_{k}\left(  x\right)  }{\left\Vert
Q_{k}\right\Vert ^{2}} \label{Solution}%
\end{equation}
becomes%
\begin{equation}
f_{n}\left(  x\right)  =\sum_{k=0}^{n}\frac{MQ_{k}\left(  0\right)
Q_{k}\left(  x\right)  }{\left\Vert Q_{k}\right\Vert ^{2}} \label{Solution2}%
\end{equation}

\section{Main results.}

\begin{proposition}
The sequence $\left(  K_{n}\left(  x,0\right)  \right)  _{n=0}^{\infty}$ is
orthogonal with respect to the weight function
\[
t\left(  x\right)  =\frac{x}{1-x^{2}}%
\]
for \ $-1\leq x\leq1$, i. e.%
\[
\int\limits_{-1}^{1}K_{n}\left(  x,0\right)  K_{m}\left(  x,0\right)  \frac
{x}{1-x^{2}}dx=0,\ n\neq m.
\]
For all $n\geq1,$ one has%
\[
K_{n}\left(  0,0\right)  =
\]%
\begin{equation}
\frac{n\left(  n-1\right)  \left(  2n-1\right)  }{2}\left(  \frac{\left(
-1\right)  ^{\frac{n-2}{2}}\left(  n-3\right)  !!}{2^{n}n!!}\sum_{k=0}%
^{n}\left(  -1\right)  ^{n-k}\left(  C_{k}^{n}\right)  ^{2}-\frac{\left(
-1\right)  ^{\frac{n-1}{2}}\left(  n-2\right)  !!}{2^{n-1}\left(  n+1\right)
!!}\sum_{k=0}^{n-1}\left(  -1\right)  ^{n-k+1}\left(  C_{k}^{n-1}\right)
^{2}\right)  \label{Knn00}%
\end{equation}

\end{proposition}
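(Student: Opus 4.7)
My plan is to handle the two assertions separately, each reducing to a direct application of tools already developed in the paper. For the orthogonality, the key observation is that $K_n(x,0)$, being a finite linear combination of $Q_0(x),\dots,Q_n(x)$, is a polynomial of degree at most $n$ in $x$; hence for $m<n$ the product $xK_m(x,0)$ is itself a polynomial of degree at most $n$. I would then invoke the reproducing kernel property \eqref{Reprkernel} with $f(t)=tK_m(t,0)$ evaluated at $x=0$, giving
\[
\int_{-1}^{1} K_n(0,t)\, t\, K_m(t,0)\, \frac{dt}{1-t^{2}} \;=\; f(0) \;=\; 0\cdot K_m(0,0) \;=\; 0.
\]
The symmetry $K_n(0,t)=K_n(t,0)$, which is immediate from the definition \eqref{Kern}, converts this into the claimed orthogonality relation, and the case $n<m$ follows by the same argument with the roles of $n$ and $m$ swapped.

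For the closed-form expression for $K_n(0,0)$, I would apply the Christoffel--Darboux formula \eqref{CDS11} with $y=0$:
\[
K_n(x,0) \;=\; \frac{1}{\|Q_n\|^{2}}\cdot\frac{Q_{n+1}(x)\,Q_n(0) - Q_{n+1}(0)\,Q_n(x)}{x}.
\]
Since the numerator vanishes at $x=0$, L'H\^{o}pital's rule (or equivalently a first-order Taylor expansion about $x=0$) yields
\[
K_n(0,0) \;=\; \frac{1}{\|Q_n\|^{2}}\bigl(Q_{n+1}'(0)\,Q_n(0) - Q_{n+1}(0)\,Q_n'(0)\bigr).
\]
Recalling from the definition \eqref{Qqn} that $Q_n'(x)=L_{n-1}(x)$, this simplifies to
\[
K_n(0,0) \;=\; \frac{1}{\|Q_n\|^{2}}\bigl(L_n(0)\,Q_n(0) - Q_{n+1}(0)\,L_{n-1}(0)\bigr).
\]

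The remaining step is pure substitution. Using $\|Q_n\|^{-2}=n(n-1)(2n-1)/2$ from \eqref{NormQn}, the closed form for $Q_n(0)$ from \eqref{Qnatzero} together with the corresponding formula for $Q_{n+1}(0)$ obtained by the natural index shift, and the explicit expansion of $L_n(0)$ from \eqref{Lnat0} together with its shift for $L_{n-1}(0)$, the right-hand side collapses to the stated formula \eqref{Knn00}. I do not anticipate a genuine obstacle: the only point requiring attention is the bookkeeping of signs and index shifts in the two binomial sums. Specifically, the exponent $(-1)^{n-1-k}$ that arises naturally in the expansion of $L_{n-1}(0)$ must be rewritten as $(-1)^{n-k+1}$ (since $(-1)^{\pm 2}=1$) in order to match the form displayed in the proposition, and the parities $(n-2)/2$ and $(n-1)/2$ in the two double-factorial prefactors come from $Q_n(0)$ and $Q_{n+1}(0)$ respectively.
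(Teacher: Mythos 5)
Your computation of $K_n(0,0)$ is essentially the paper's own argument: the paper applies the confluent Christoffel--Darboux formula \eqref{ABC} at $x=0$ and then substitutes $Q_{n+1}'(0)=L_n(0)$, $Q_n'(0)=L_{n-1}(0)$, \eqref{NormQn}, \eqref{Qnatzero} and \eqref{Lnat0}; your route of taking $y=0$ in \eqref{CDS11} and passing to the limit $x\to 0$ is just a rederivation of \eqref{ABC}, so that half matches. The genuine difference is the orthogonality claim: the paper's proof is entirely silent on it, whereas you supply an actual argument via the reproducing property \eqref{Reprkernel} applied to $f(t)=tK_m(t,0)$ together with the symmetry of \eqref{Kern}, and that argument is sound --- so you prove strictly more than the paper does. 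One point of care, though: the reproducing property holds for $f$ in $\mathrm{span}\{Q_0,\dots,Q_n\}$, and since every $Q_k$ vanishes at $\pm1$ this span is a \emph{proper} subspace of the polynomials of degree at most $n$, so your justification ``$xK_m(x,0)$ has degree at most $n$'' is not by itself enough. It is easily repaired: $K_m(x,0)$ is divisible by $x^2-1$ with quotient of degree at most $m-2$, hence $xK_m(x,0)=(x^2-1)\cdot xp(x)$ with $\deg(xp)\le m-1\le n-2$, which does lie in the span for $m<n$. (It is also worth noting that, by \eqref{Qqn}, $Q_k$ has parity $(-1)^k$, so $Q_k(0)=0$ for odd $k$ and $K_n(x,0)$ is an even function; the integrand $K_n(x,0)K_m(x,0)\,x/(1-x^2)$ is therefore odd and the integral vanishes for \emph{all} $n,m$, which makes the stated ``orthogonality'' with respect to the signed weight $x/(1-x^2)$ degenerate --- but your proof of the stated claim stands.)
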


\begin{proof}
by \eqref{NormQn},\eqref{Qnatzero}%
\begin{equation}
K_{n}\left(  x,0\right)  =%
{\displaystyle\sum\limits_{k=0}^{n}}
\frac{Q_{k}(x)Q_{k}(0)}{\left\Vert Q_{k}\right\Vert ^{2}}=%
{\displaystyle\sum\limits_{k=0}^{n}}
\frac{\left(  -1\right)  ^{\frac{k-2}{2}}k\left(  k-1\right)  \left(
2k-1\right)  \left(  k-3\right)  !!}{2k!!}Q_{k}(x) \label{Knxat0}%
\end{equation}
by \eqref{ABC},\eqref{Qnatzero}%
\[
K_{n}\left(  0,0\right)  =\frac{n\left(  n-1\right)  \left(  2n-1\right)  }%
{2}\left(  Q_{n+1}^{\prime}(0)\frac{\left(  -1\right)  ^{\frac{n-2}{2}}\left(
n-3\right)  !!}{n!!}-\frac{\left(  -1\right)  ^{\frac{n-1}{2}}\left(
n-2\right)  !!}{\left(  n+1\right)  !!}Q_{n}^{\prime}(0)\right)
\]
from which it follows%
\begin{equation}
K_{n}\left(  0,0\right)  =\frac{n\left(  n-1\right)  \left(  2n-1\right)  }%
{2}\left(  L_{n}(0)\frac{\left(  -1\right)  ^{\frac{n-2}{2}}\left(
n-3\right)  !!}{n!!}-\frac{\left(  -1\right)  ^{\frac{n-1}{2}}\left(
n-2\right)  !!}{\left(  n+1\right)  !!}L_{n-1}(0)\right)  \label{Kn0at0}%
\end{equation}
Now comming back to the\eqref{Lnat0},gives%
\[
L_{n}\left(  0\right)  =\frac{1}{2^{n}}\sum_{k=0}^{n}\left(  -1\right)
^{n-k}\left(  C_{k}^{n}\right)  ^{2}%
\]
Using \eqref{Kn0at0}, \eqref{Lnat0}, we have the desired result \eqref{Knn00} .
\end{proof}

\begin{theorem}
the integral
\[
I_{n}=\int\limits_{-1}^{1}\left(  F_{n}\left(  x\right)  \right)  ^{2}%
\frac{dx}{1-x^{2}}%
\]

where $F_{n}$ is any real polynomial of degree $n$ such that $F_{n}%
(0)=1,$reaches its minimum value
\begin{equation}
M=\frac{1}{%
{\displaystyle\sum\limits_{j=0}^{n}}
\frac{j\left(  j-1\right)  \left(  2j-1\right)  }{2}\left(  \frac{\left(
j-3\right)  !!}{j!!}\right)  ^{2}} \label{Valuem}%
\end{equation}

if and only if%
\begin{equation}
F_{n}\left(  x\right)  =\frac{1}{%
{\displaystyle\sum\limits_{j=0}^{n}}
\frac{j\left(  j-1\right)  \left(  2j-1\right)  }{2}\left(  \frac{\left(
j-3\right)  !!}{j!!}\right)  ^{2}}\sum_{k=0}^{n}\left(  -1\right)
^{\frac{k-2}{2}}\frac{k\left(  k-1\right)  \left(  2k-1\right)  \left(
k-3\right)  !!}{2k!!}Q_{k}\left(  x\right)  \label{Fnnx}%
\end{equation}

ie%
\begin{equation}
M=\frac{1}{K_{n}\left(  0,0\right)  } \label{Kernelm}%
\end{equation}

and%
\begin{equation}
F_{n}\left(  x\right)  =\frac{K_{n}\left(  x,0\right)  }{K_{n}\left(
0,0\right)  } \label{Kernelf}%
\end{equation}

\end{theorem}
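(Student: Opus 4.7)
My plan is to reduce the problem to minimising a quadratic form under a single linear constraint, using the orthogonality proved in Proposition~1 as the fundamental structural input. Concretely, any polynomial $F_{n}$ of degree $\le n$ admits a unique expansion $F_{n}(x)=\sum_{k=0}^{n} a_{k}Q_{k}(x)$ in the orthogonal basis $\{Q_{k}\}$ (with the convention that the $k=0,1$ terms drop out because of the factor $k(k-1)(2k-1)$ that will appear through $\|Q_{k}\|^{2}$). By the orthogonality relation \eqref{OrthQn} and the norm formula \eqref{NormQn},
\[
I_{n}=\sum_{k=0}^{n} a_{k}^{2}\,\|Q_{k}\|^{2},
\]
while the interpolation condition $F_{n}(0)=1$ reads $\sum_{k=0}^{n} a_{k}Q_{k}(0)=1$.

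Next I would apply Cauchy--Schwarz in the weighted $\ell^{2}$ form
\[
1=\Bigl(\sum_{k=0}^{n} a_{k}Q_{k}(0)\Bigr)^{2}=\Bigl(\sum_{k=0}^{n}\bigl(a_{k}\|Q_{k}\|\bigr)\cdot\tfrac{Q_{k}(0)}{\|Q_{k}\|}\Bigr)^{2}\le\Bigl(\sum_{k=0}^{n} a_{k}^{2}\|Q_{k}\|^{2}\Bigr)\Bigl(\sum_{k=0}^{n}\tfrac{Q_{k}(0)^{2}}{\|Q_{k}\|^{2}}\Bigr)=I_{n}\,K_{n}(0,0),
\]
which already gives $I_{n}\ge 1/K_{n}(0,0)$. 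Equality in Cauchy--Schwarz forces the two sequences to be proportional, i.e.\ $a_{k}\|Q_{k}\|^{2}=\lambda\,Q_{k}(0)$ for a common constant $\lambda$; imposing the constraint fixes $\lambda=1/K_{n}(0,0)$, and substituting back gives
\[
F_{n}(x)=\frac{1}{K_{n}(0,0)}\sum_{k=0}^{n}\frac{Q_{k}(0)\,Q_{k}(x)}{\|Q_{k}\|^{2}}=\frac{K_{n}(x,0)}{K_{n}(0,0)},
\]
which is \eqref{Kernelf}, together with $M=1/K_{n}(0,0)$, i.e.\ \eqref{Kernelm}. This is really a repetition of what was already derived around \eqref{Mvalue}--\eqref{Solution2}; my job is mainly to recast it cleanly in kernel language.

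Finally, to pass from \eqref{Kernelm} to the explicit form \eqref{Valuem} and from \eqref{Kernelf} to \eqref{Fnnx}, I would substitute the closed-form values $\|Q_{k}\|^{2}=\dfrac{2}{k(k-1)(2k-1)}$ from \eqref{NormQn} and $Q_{k}(0)=\dfrac{(-1)^{(k-2)/2}(k-3)!!}{k!!}$ from \eqref{Qnatzero}, so that
\[
\frac{Q_{k}(0)^{2}}{\|Q_{k}\|^{2}}=\frac{k(k-1)(2k-1)}{2}\left(\frac{(k-3)!!}{k!!}\right)^{\!2},\qquad \frac{Q_{k}(0)}{\|Q_{k}\|^{2}}=(-1)^{(k-2)/2}\frac{k(k-1)(2k-1)(k-3)!!}{2\,k!!},
\]
and summing over $k$ delivers the stated formulas verbatim.

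The only genuine obstacle I anticipate is bookkeeping around the parity of $k$: the expression $Q_{k}(0)$ given by \eqref{Qnatzero} vanishes for odd $k$ (odd integrand over a symmetric interval), so the odd-indexed terms silently drop out of both $K_{n}(x,0)$ and $K_{n}(0,0)$, and one must check that the double-factorial symbols $(k-3)!!$ and $k!!$ are interpreted so that the vanishing occurs correctly (and that the $k=0,1$ terms are harmless, being killed by the prefactor $k(k-1)(2k-1)$). Apart from this, the argument is a clean Cauchy--Schwarz on the orthogonal expansion; no deeper tool is required.
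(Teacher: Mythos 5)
Your proposal is correct and follows essentially the same route as the paper: the paper's Section on the extremal problem performs exactly this orthogonal expansion $F_n=\sum a_kQ_k$ with the constraint $\sum a_kQ_k(0)=1$, arrives at $M=1/K_n(0,0)$ and $F_n(x)=K_n(x,0)/K_n(0,0)$, and then substitutes \eqref{NormQn} and \eqref{Qnatzero}. You merely make explicit the Cauchy--Schwarz step that the paper dismisses as ``standard minimization technique,'' and you rightly flag the same bookkeeping issues (vanishing of the $k=0,1$ and odd-$k$ terms, and implicitly that $I_n<\infty$ forces $F_n(\pm1)=0$) that the paper itself glosses over.
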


\begin{proof}
Using \eqref{Integrl}, \eqref{Mvalue}, \eqref{Solution},
\eqref{Qnatzero},\eqref{NormQn},gives the minimum value%
\[
M=\frac{1}{%
{\displaystyle\sum\limits_{j=0}^{n}}
\frac{j\left(  j-1\right)  \left(  2j-1\right)  }{2}\left(  \frac{\left(
j-3\right)  !!}{j!!}\right)  ^{2}}%
\]
and
\[
F_{n}\left(  x\right)  =\frac{1}{%
{\displaystyle\sum\limits_{j=0}^{n}}
\frac{j\left(  j-1\right)  \left(  2j-1\right)  }{4}\left(  \frac{\left(
j-3\right)  !!}{j!!}\right)  ^{2}}\sum_{k=0}^{n}\left(  -1\right)
^{\frac{k-2}{2}}\frac{k\left(  k-1\right)  \left(  2k-1\right)  \left(
k-3\right)  !!}{2k!!}Q_{k}\left(  x\right)
\]
and this completes the proof of Theorem .
\end{proof}

\section{Fourier development and approximations}

\bigskip Now for a function $f\left(  x\right)  $ expandable in these
polynomials one gets,%
\[
f\left(  x\right)  =\sum_{n=0}^{\infty}a_{n}Q_{n}\left(  x\right)
\]
where%
\begin{equation}
a_{n}=\frac{n\left(  n-1\right)  \left(  2n-1\right)  }{2}%
{\displaystyle\int\limits_{-1}^{1}}
f\left(  x\right)  Q_{n}\left(  x\right)  \dfrac{dx}{1-x^{2}} \label{FourierQ}%
\end{equation}

\begin{proposition}
If%
\[
f\left(  x\right)  =\sum_{n=0}^{\infty}a_{n}Q_{n}\left(  x\right)
\]
then%
\begin{equation}
a_{n}=\frac{\left(  -1\right)  ^{n}\left(  2n-1\right)  }{2^{n}\left(
n-1\right)  !}\sum_{k=0}^{n}\left(  -1\right)  ^{k}C_{k}^{n-1}\mu_{2k}
\label{anex}%
\end{equation}
where%
\[
\mu_{2k}=\mu_{2k}\left(  f^{\left(  n\right)  }\right)  =%
{\displaystyle\int\limits_{-1}^{1}}
x^{2k}f^{\left(  n\right)  }\left(  x\right)  dx
\]

\end{proposition}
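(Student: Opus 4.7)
The plan is to reduce the Fourier coefficient \eqref{FourierQ} to an integral of the $n$-th derivative of $f$ against $(x^2-1)^{n-1}$ by means of a Rodrigues-type representation of $Q_n$, perform integration by parts $n$ times, and then expand by the binomial theorem.

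First, I would substitute the Rodrigues-type expression from \eqref{ExpQnn}, namely $Q_n(x)=\tfrac{-1}{2^{n-1}(n-1)!}\bigl((x^2-1)^{n-1}\bigr)^{(n-2)}$, into \eqref{FourierQ}. The weight factor $1/(1-x^2)$ is absorbed by invoking the identity \eqref{Second}, $(x^2-1)\bigl((x^2-1)^{n-1}\bigr)^{(n)}=n(n-1)\bigl((x^2-1)^{n-1}\bigr)^{(n-2)}$, which rewrites $Q_n(x)/(1-x^2)$ as a scalar multiple of $\bigl((x^2-1)^{n-1}\bigr)^{(n)}$ and, crucially, cancels the $n(n-1)$ in the prefactor. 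This converts \eqref{FourierQ} into
\[
a_n \;=\; \frac{\pm(2n-1)}{2^{n}(n-1)!}\int_{-1}^{1} f(x)\,\bigl((x^{2}-1)^{n-1}\bigr)^{(n)}\,dx.
\]

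Next, I would apply the generalized integration-by-parts formula \eqref{GenerIntegParts} with $u=f$ and $v=(x^{2}-1)^{n-1}$, transferring all $n$ derivatives from $v$ onto $f$. Because $v$ has a zero of order $n-1$ at each endpoint $\pm 1$, the derivatives $v^{(0)},v^{(1)},\dots ,v^{(n-2)}$ all vanish there, and in the boundary sum only the term with $v^{(n-1)}=2^{n-1}(n-1)!\,L_{n-1}(x)$ remains, coupled with $f$ at $\pm 1$. Since the expansion hypothesis $f=\sum a_n Q_n$ together with \eqref{Qqn1} forces $f(\pm 1)=0$, this surviving boundary term also vanishes. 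What is left is the clean identity
\[
\int_{-1}^{1} f(x)\,\bigl((x^{2}-1)^{n-1}\bigr)^{(n)}\,dx \;=\; (-1)^{n}\int_{-1}^{1} f^{(n)}(x)\,(x^{2}-1)^{n-1}\,dx.
\]

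Finally, I would expand $(x^{2}-1)^{n-1}=\sum_{k=0}^{n-1}(-1)^{n-1-k}C_{k}^{n-1}x^{2k}$ by the binomial theorem, interchange the finite sum with the integral, and recognize each $\int_{-1}^{1}x^{2k}f^{(n)}(x)\,dx$ as $\mu_{2k}$. Collecting the signs $(-1)^{n}\cdot(-1)^{n-1-k}$ and the constant $(2n-1)/(2^{n}(n-1)!)$ produces the stated formula \eqref{anex}; the upper summation index can be written as $n$ rather than $n-1$ since $C_{n}^{n-1}=0$ makes the last term vanish.

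The main obstacle is the boundary analysis in the integration-by-parts step: one must verify that the single potentially nonvanishing boundary contribution, namely $[f(x)\cdot v^{(n-1)}(x)]_{-1}^{1}$, indeed disappears, which requires the implicit regularity $f\in C^{n}[-1,1]$ together with $f(\pm 1)=0$ inherited from the expansion in $Q_n$. The remaining labor is careful sign and factorial bookkeeping through the Rodrigues substitution, the cancellation against $n(n-1)$, and the parity accounting in the binomial expansion.
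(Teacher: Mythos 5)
Your proposal follows essentially the same route as the paper's own proof: substitute the Rodrigues-type representation of $Q_n$ into \eqref{FourierQ} so that the weight $1/(1-x^2)$ is absorbed (your detour through \eqref{ExpQnn} and \eqref{Second} is equivalent to using \eqref{Rodrigues} directly), integrate by parts $n$ times via \eqref{GenerIntegParts}, and finish with the binomial expansion of $(1-x^2)^{n-1}$. Your treatment is in fact slightly more careful than the paper's, since you explicitly isolate and dispose of the one potentially surviving boundary term $\left[f(x)\,v^{(n-1)}(x)\right]_{-1}^{1}$, which the paper passes over in silence.
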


For $\ $the Fourier polynomial
\[
P_{n}\left(  x\right)  =\sum_{k=0}^{n}a_{k}Q_{n}\left(  x\right)
\]
If $f\left(  x\right)  =x^{k},$then%
\begin{equation}
a_{k}=\frac{\left(  -1\right)  ^{k}k2^{k-1}\left(  \left(  k-1\right)
!\right)  ^{2}}{\left(  2k-2\right)  !}\text{ \ \ \ \ , }k=1,2,3,4......n
\label{akk}%
\end{equation}

\begin{quote}

\end{quote}

\begin{proof}
Employing \eqref{Rodrigues},\eqref{FourierQ}, yields ,%
\[
a_{n}=\frac{\left(  2n-1\right)  }{2^{n}\left(  n-1\right)  !}%
{\displaystyle\int\limits_{-1}^{1}}
f\left(  x\right)  \left[  \left(  x^{2}-1\right)  ^{n-1}\right]  ^{\left(
n\right)  }dx
\]
Combining \eqref{anexpansion},\eqref{GenerIntegParts} we get%
\begin{equation}
=\frac{\left(  -1\right)  ^{n}\left(  2n-1\right)  }{2^{n}\left(  n-1\right)
!}%
{\displaystyle\int\limits_{-1}^{1}}
\left(  1-x^{2}\right)  ^{n-1}f^{\left(  n\right)  }\left(  x\right)  dx
\label{anexpansion}%
\end{equation}%
\[
=\frac{\left(  -1\right)  ^{n}\left(  2n-1\right)  }{2^{n}\left(  n-1\right)
!}\sum_{k=0}^{n}\left(  -1\right)  ^{k}C_{k}^{n-1}%
{\displaystyle\int\limits_{-1}^{1}}
x^{2k}f^{\left(  n\right)  }\left(  x\right)  dx
\]
So the proof of the \eqref{anex} is completed.Therefore, by
\eqref{anexpansion}, \
\[
a_{k}=\frac{\left(  -1\right)  ^{k}\left(  2k-1\right)  }{2^{k}\left(
k-1\right)  !}%
{\displaystyle\int\limits_{-1}^{1}}
\left(  1-x^{2}\right)  ^{k-1}f^{\left(  k\right)  }\left(  x\right)  dx
\]%
\[
=\frac{\left(  -1\right)  ^{k}\left(  2k-1\right)  k}{2^{k}}%
{\displaystyle\int\limits_{-1}^{1}}
\left(  1-x^{2}\right)  ^{k-1}dx
\]%
\[
=\frac{\left(  -1\right)  ^{k}\left(  2k-1\right)  k}{2^{k-1}}%
{\displaystyle\int\limits_{0}^{1}}
\left(  1-x^{2}\right)  ^{k-1}dx
\]%
\[
=\frac{\left(  -1\right)  ^{k}\left(  2k-1\right)  k}{2^{k-1}}%
{\displaystyle\int\limits_{0}^{\frac{\pi}{2}}}
\sin^{2k-1}xdx=\frac{\left(  -1\right)  ^{k}\left(  2k-1\right)  k}{2^{k-1}%
}\frac{4^{k-1}\left(  \left(  k-1\right)  !\right)  ^{2}}{\left(  2k-1\right)
!}%
\]%
\[
=\frac{\left(  -1\right)  ^{k}k2^{k-1}\left(  \left(  k-1\right)  !\right)
^{2}}{\left(  2k-2\right)  !}%
\]
and this completes the proof.of \eqref{anex} and \eqref{akk}.
\end{proof}

\begin{theorem}
Let $f$ be an increasing function on $\left[  -1\text{ \ \ \ }1\right]  $
,with $f\left(  a\right)  =-1$ and $f\left(  b\right)  =1,$such that $a<b$ and
$\varphi$ a nonnegative weight function on the same interval, such that the
integral
\[
\int\limits_{-1}^{1}f\left(  x\right)  ^{n}\varphi\left(  x\right)
dx\ \ \ \ \ \ \ \ \ \ \ \left(  n\geq0\right)
\]

exists; Then the sequence of functions $x\mapsto r_{0}\left(  f\left(
x\right)  \right)  ,x\mapsto r_{1}\left(  f\left(  x\right)  \right)
,...x\mapsto r_{n}\left(  f\left(  x\right)  \right)  ...$that minimizes the
integrals%
\begin{equation}
I_{n}=\int\limits_{a}^{b}q_{n}\left(  f\left(  x\right)  \right)  ^{2}%
\varphi\left(  x\right)  dx \label{In}%
\end{equation}

for all polynomial : $q_{n}\left(  x\right)  =b_{0}+b_{1}x+......b_{n}x^{n}$,
forms an orthogonal system on $\left[  a\text{ \ \ \ }b\right]  $ in respect
of \ $\varphi.$Where%
\begin{equation}
\frac{\varphi\left(  x\right)  }{f^{\prime}\left(  x\right)  }=\left(
1-f^{2}\left(  x\right)  \right)  \label{weight}%
\end{equation}

i.e,%
\[
\int\limits_{a}^{b}r_{n}\left(  f\left(  x\right)  \right)  r_{m}\left(
f\left(  x\right)  \right)  \left(  1-f^{2}\left(  x\right)  \right)
f^{\prime}\left(  x\right)  dx=0\text{ \ \ \ \ \ \ \ \ ,}n=0,1,2.....\left(
n\neq m\right)
\]

If
\begin{equation}
f\left(  x\right)  =\frac{\lambda x+\alpha}{\mu x+\beta}\text{ , \ such
that\ \ \ \ }\lambda\beta-\mu\alpha=1 \label{fff}%
\end{equation}

satisfie, $f\left(  \frac{\beta-\alpha}{\lambda+\mu}\right)  =-1,f\left(
\frac{\beta-\alpha}{\lambda-\mu}\right)  =1,$then%
\[
\int\limits_{\frac{\beta-\alpha}{\lambda+\mu}}^{\frac{\beta-\alpha}%
{\lambda-\mu}}r_{n}\left(  \frac{\lambda x+\alpha}{\mu x+\beta}\right)
r_{m}\left(  \frac{\lambda x+\alpha}{\mu x+\beta}\right)  \varphi\left(
x\right)  dx=0\ \ \ \ \ \ \ \ ,n=0,1,2.....\left(  n\neq m\right)
\]

where%
\begin{equation}
\varphi\left(  x\right)  =\frac{\left(  \left(  \mu-\lambda\right)
x+\beta-\alpha\right)  \left(  \left(  \mu+\lambda\right)  x+\beta
+\alpha\right)  }{\left(  \mu x+\beta\right)  ^{4}} \label{wffff}%
\end{equation}

Then the sequence of functions%
\[
x\mapsto r_{0}\left(  \frac{\lambda x+\alpha}{\mu x+\beta}\right)  ,x\mapsto
r_{1}\left(  \frac{\lambda x+\alpha}{\mu x+\beta}\right)  ,...x\mapsto
r_{n}\left(  \frac{\lambda x+\alpha}{\mu x+\beta}\right)
\]

forms an orthogonal system on $\left[  \frac{\beta-\alpha}{\lambda+\mu}\text{
\ \ \ }\frac{\beta-\alpha}{\lambda-\mu}\right]  $ in respect of \ $\varphi.$
\end{theorem}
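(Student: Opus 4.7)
The plan is to carry out a single change of variable and then invoke the classical theory on $[-1,1]$. The substitution $u=f(x)$ is a bijection $[a,b]\to[-1,1]$ by the monotonicity and boundary conditions on $f$, with Jacobian $du=f'(x)\,dx$. Applied to the integrand of $I_{n}$ in \eqref{In} this gives
\[
I_{n}=\int_{-1}^{1} q_{n}(u)^{2}\,\frac{\varphi(x(u))}{f'(x(u))}\,du=\int_{-1}^{1} q_{n}(u)^{2}(1-u^{2})\,du,
\]
where the second equality uses the defining relation $\varphi/f'=1-f^{2}$ from \eqref{weight}. Thus the extremal problem on $[a,b]$ with weight $\varphi$ reduces to a weighted $L^{2}$ minimization on $[-1,1]$ with the single fixed weight $1-u^{2}$, independent of $f$.

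Next I would invoke the standard fact that polynomials $r_{n}(u)$ orthogonal on $[-1,1]$ with respect to $1-u^{2}$ exist and are unique up to normalization, and that they characterize the minimizers of such extremal problems exactly by the Gram-matrix / reproducing-kernel argument used in Section~2 to derive \eqref{Solution} for the weight $1/(1-u^{2})$. Up to scale these $r_{n}$ are the derivatives $L_{n}'(u)$ of the Legendre polynomials: Legendre's equation together with integration by parts yields $\int_{-1}^{1} L_{n}'(u) L_{m}'(u)(1-u^{2})\,du=0$ for $n\neq m$. Reversing the substitution,
\[
0=\int_{-1}^{1} r_{n}(u) r_{m}(u)(1-u^{2})\,du=\int_{a}^{b} r_{n}(f(x)) r_{m}(f(x))(1-f(x)^{2}) f'(x)\,dx,
\]
which is precisely the asserted orthogonality of $\{r_{n}\circ f\}$ on $[a,b]$ with weight $\varphi(x)=(1-f^{2}(x))f'(x)$.

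For the Möbius specialization $f(x)=(\lambda x+\alpha)/(\mu x+\beta)$ with $\lambda\beta-\mu\alpha=1$, I would verify the hypotheses by direct computation. The derivative is
\[
f'(x)=\frac{\lambda\beta-\mu\alpha}{(\mu x+\beta)^{2}}=\frac{1}{(\mu x+\beta)^{2}}>0,
\]
so $f$ is increasing on any interval avoiding the pole $x=-\beta/\mu$, and solving $f(x)=\pm 1$ gives the two endpoints of the interval named in the theorem. Factoring
\[
1-f^{2}(x)=\frac{((\mu-\lambda)x+\beta-\alpha)((\mu+\lambda)x+\beta+\alpha)}{(\mu x+\beta)^{2}}
\]
and multiplying by $f'(x)$ reproduces $\varphi(x)$ as displayed in \eqref{wffff}; the general orthogonality then specializes to the final displayed identity of the theorem.

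The main obstacle is interpretive rather than computational: the statement does not fix a normalization on $q_{n}$, so as literally written the unconstrained infimum of $I_{n}$ is zero. One must impose a natural normalization (for instance $q_{n}(0)=1$, mirroring the setup of the earlier extremal problem for the $Q_{n}$) to make the minimization well-posed; once that choice is made the standard orthogonal-polynomial machinery from Section~2 transfers verbatim through the substitution, and the only remaining work is the routine Möbius algebra above.
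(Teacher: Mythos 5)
Your proof follows essentially the same route as the paper's: substitute $t=f(x)$ so that, via \eqref{weight}, the integral $I_{n}$ becomes a minimization on $[-1,1]$ with weight $1-t^{2}$, conclude orthogonality there, and transfer it back through the inverse substitution, with the M\"obius case handled by computing $f'$ and $1-f^{2}$ explicitly. You in fact supply details the paper omits --- the normalization needed to make the minimization well-posed and the explicit algebra producing \eqref{wffff} --- so the proposal matches and slightly sharpens the printed argument.
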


\begin{proof}
the polynomials $r_{n};$ $(n=0,1,2,...)$are orthogonal on $[-1$ $\ \ \ 1]$ in
respect of $t\mapsto1-t^{2}$.i-e%
\[
\int\limits_{-1}^{1}r_{n}\left(  t\right)  r_{m}\left(  t\right)  \left(
1-t^{2}\right)  dt=0\text{ \ \ \ \ \ \ \ \ ,}n=0,1,2.....\left(  n\neq
m\right)
\]

Substituting $f(x)=t$ \ in \eqref{In} we have%
\[
I_{n}=\int\limits_{a}^{b}q_{n}\left(  t\right)  ^{2}\frac{\varphi\left(
f^{-1}\left(  t\right)  \right)  }{f^{\prime}\left(  f^{-1}\left(  t\right)
\right)  }dt\ \ \ \ \ \ ,n=0,1,2.....
\]

if%
\[
\frac{\varphi\left(  f^{-1}\left(  t\right)  \right)  }{f^{\prime}\left(
f^{-1}\left(  t\right)  \right)  }=1-t^{2}%
\]

Now comming back to the old variable with according to Theorem , the
minimizing functions
\[
x\mapsto r_{0}\left(  f\left(  x\right)  \right)  ,x\mapsto r_{1}\left(
f\left(  x\right)  \right)  ,x\mapsto r_{2}\left(  f\left(  x\right)  \right)
,...x\mapsto r_{n}\left(  f\left(  x\right)  \right)  ....
\]

that minimize \eqref{In} form an orthogonal system on $\left[  a\text{
\ \ \ }b\right]  $ in respect of \ $\varphi.$Therefore we denote as%
\[
x\longrightarrow r_{k}\left(  \frac{\lambda x+\alpha}{\mu x+\beta}\right)
,\text{ \ \ }k=2,3,4........
\]

form an orthogonal system on $\left[  \frac{\beta-\alpha}{\lambda+\mu}\text{
\ \ \ }\frac{\beta-\alpha}{\lambda-\mu}\right]  $ in respect of \ $\varphi$%
\[
\varphi\left(  x\right)  =\frac{\left(  \left(  \mu-\lambda\right)
x+\beta-\alpha\right)  \left(  \left(  \mu+\lambda\right)  x+\beta
+\alpha\right)  }{\left(  \mu x+\beta\right)  ^{4}}%
\]

and this completes the proof of Theorem
\end{proof}

\end{document}